\newcommand{\circledOne}{\text{\ding{172}}}
\newcommand{\circledTwo}{\text{\ding{173}}}
\newcommand{\circledThree}{\text{\ding{174}}}
\newcommand{\al}[1]{{\color{blue}#1}} 
\newcommand{\dotprod}[2]{\langle #1,#2 \rangle} 
\begin{document}
\title{Stochastic Adversarial Noise in the ``Black Box'' Optimization Problem\thanks{The work was supported by the Ministry of Science and Higher Education of the Russian Federation (Goszadaniye) 075-00337-20-03, project No. 0714-2020-0005.}}
\titlerunning{Stochastic Adversarial Noise in the "Black Box" Optimization Problem}
%
\author{Aleksandr Lobanov\inst{1,2,3}\orcidID{0000-0003-1620-9581}}
%
\authorrunning{A. Lobanov}
%
\institute{Moscow Institute of Physics and Technology, Dolgoprudny, Russia
\\ \and 
 ISP RAS Research Center for Trusted Artificial Intelligence, Moscow, Russia \\ \and 
 Moscow Aviation Institute, Moscow, Russia\\
   \email{lobanov.av@mipt.ru}}

%
\maketitle              
\begin{abstract}
This paper is devoted to the study of the solution of a stochastic convex black box optimization problem. Where the black box problem means that the gradient-free oracle only returns the value of objective function, not its gradient. We consider non-smooth and smooth setting of the solution to the black box problem under adversarial stochastic noise. For two techniques creating gradient-free methods: smoothing schemes via $L_1$ and $L_2$ randomizations, we find the maximum allowable level of adversarial stochastic noise that guarantees convergence. Finally, we analyze the convergence behavior of the algorithms under the condition of a large value of noise level.

\keywords{Gradient-free methods \and Black-box problem  \and Adversarial stochastic noise \and Smooth and non-smooth setting.}
\end{abstract}
\section{Intoduction}
    
    The study of optimization problems, in which only a limited part of information is available, namely, the value of the objective function, in recent years is relevant and in demand. Such problems are usually classified as zero-order optimization problems \cite{Conn_2009} or black-box problems \cite{Audet_2017}. Where the latter intuitively understands that the black box is some process that has only two features (input, output), and about which nothing is known. The essential difference of this class of optimization problems is that the oracle returns only the value of the objective function and not its gradient or higher order derivatives, which are popular in numerical methods \cite{Polyak_1987}. This kind of oracle is commonly referred to as a zero-order/gradient-free oracle \cite{Rosenbrock_1960}, which acts as a black box. One of several leading directions for solving black-box optimization problems are zero-order numerical methods, which are based on first-order methods, approximating the gradient via finite-difference models. The first such method that gave rise to the field of research is the Kiefer-Wolfowitz method \cite{Kiefer_1952}, which was proposed in 1952. Since then, many gradient-free algorithms have been developed for applied problems in machine learning \cite{Papernot_2016,Papernot_2017}, distributed learning \cite{Scaman_2019,Yu_2021,Akhavan_2021}, federated learning \cite{Lobanov_2022,Patel_2022},  black-box attack to deep neural networks \cite{Chen_2017}, online \cite{Bach_2016,Shamir_2017,Gasnikov_2017} and standard \cite{Duchi_2015,Shibaev_2022,Gorbunov_2022} optimization. In particular, gradient-free methods are actively used  in the hyperparameters tuning of the deep learning model \cite{Li_2017,Hazan_2017,Elsken_2019}, as well as to solve the classical problem of adversarial multi-armed bandit \cite{Flaxman_2004,Bartlett_2008,Bubeck_2012}.
    
    In modern works \cite{Gasnikov_2022} authors try to develop optimal gradient-free algorithms according to three criteria at once: iteration complexity (the number of iterations performed successively guaranteeing convergence), oracle complexity (the total number of gradient-free oracle calls guaranteeing convergence), and the maximum admissible level of adversarial noise guaranteeing convergence. While the first two criteria seem understandable, the maximum level of adversarial noise can raise questions. However, in practice it is very common that the gradient-free oracle returns an inaccurate value of the objective function at the requested point. In other words, the gradient-free oracle outputs the value of the objective function with some adversarial noise. A simple example is rounding error (machine accuracy). There are also problems in which the computational complexity clearly depends on the level of adversarial noise, i.e., the greater the level of adversarial noise, the better the algorithm works in terms of computational complexity. This is why it is necessary to consider this criterion when creating a gradient-free optimization algorithm.
    
    In this paper we focus on solving a stochastic convex black-box optimization problem. We consider two black-box problem settings: smooth and non-smooth settings. We assume that the gradient-free oracle corrupted by an adversarial stochastic noise. Using smoothing schemes via $L_1$ and $L_2$ randomization, we derive the maximum permissible level of adversarial stochastic noise for each setting. And we also provide convergence results for the three-criteria optimal algorithm under adversarial stochastic noise, and provide discussions of how the algorithm would converge if the noise exceeded the optimal estimate.
    
    \subsection{Related Works}
    
     \textbf{Gradient approximation.}\;\;\; In many works \cite{Berahas_2022,Vaswani_2019,Hanzely_2020,Ermoliev_1976,Dvurechensky_2021,Polyak_1990,Akhavan_2020,Nemirovskij_1983,Gasnikov_ICML,Dvinskikh_2022,Akhavan_2022,Lobanov_2022} have developed methods, using various techniques to create algorithms via gradient approximation. For example, in \cite{Berahas_2022} a full gradient approximation instead of an exact gradient was used. Also in the smooth case, instead of an exact gradient, the some works use coordinate-wise randomization \cite{Vaswani_2019,Hanzely_2020} and random search randomization \cite{Ermoliev_1976,Dvurechensky_2021}. It is worth noting that these approximations assume that the gradient of the function is available. In \cite{Polyak_1990,Akhavan_2020}, the authors developed a gradient-free algorithm using a kernel approximation, which takes into account the increased smoothness of the objective function. For the non-smooth case \cite{Gasnikov_ICML,Dvinskikh_2022} describes a smoothing technique for creating gradient-free algorithms using $L_2$ randomization. And the paper \cite{Akhavan_2022} derived a better theoretical estimate of the variance of the gradient approximation, using $L_1$ randomization instead of $L_2$ randomization. In turn, the work \cite{Lobanov_2022} generalized the results of $L_1$ randomization to the non-smooth case and compared two smoothing schemes: $L_1$ and $L_2$ randomization, showing that in practice the clear advantage of $L_1$ randomization is not observed. In this paper, using smoothing schemes via $L_1$~and~$L_2$~randomizations, we derive optimal estimates of the level~of~noise, in which the optimality of oracle and iterative complexity~are~not~degraded.
    \newline
    \textbf{Adversarial noise.}\;\;\; There are many works \cite{Bayandina_2018,Beznosikov_2020,Risteski_2016,Vasin_2021,Dvinskikh_2022,Lobanov_2022,Akhavan_2021} that study optimization problems under adversarial noise. For example, the works \cite{Bayandina_2018,Beznosikov_2020} provided an optimal algorithm in terms of oracle complexity, but not optimal in terms of maximum allowable adversarial noise. Other works \cite{Risteski_2016,Vasin_2021} have proposed algorithms that are optimal in terms of the maximum permissible level of adversarial noise, but are not optimal according to the criterion of oracle complexity. A gradient-free algorithm, which is optimal according to two criteria: oracle complexity and the maximum permissible level of adversarial noise, is proposed in \cite{Dvinskikh_2022}. Whereas work \cite{Lobanov_2022} provided an optimal gradient-free algorithm for all three criteria: iteration complexity, oracle complexity, and the maximum allowable level of adversarial noise. However, these works considered the concept of adversarial deterministic noise, whereas for adversarial stochastic noise (see., e.g. \cite{Akhavan_2021}) the optimal bound for solving the black box optimization problem in smooth and non-smooth settings has not been obtained. In this paper, we will solve the black box problem in two settings with adversarial stochastic noise.
    
    \subsection{Paper Organization}
    The structure of this paper is as follows. In Section \ref{Section:Setting_and_Assumptions}, we describe the problem statement, as well as the basic assumptions and notations. We present the main result in Section \ref{Section:Main_Result}. In Section \ref{Section:Discussion}, we discuss the theoretical results obtained. While Section \ref{Section:Conclusion} concludes this paper.

\section{Setting and Assumptions}\label{Section:Setting_and_Assumptions}
    We study a standard stochastic convex black-box optimization problem:
    \begin{equation}\label{Init_problem}
        f^* = \min_{x \in Q} \left\{ f(x) := \mathbb{E}_\xi \left[ f(x,\xi) \right] \right\},
    \end{equation}
    where $Q \subseteq \mathbb{R}^d$ is a convex and compact set, $f: Q \rightarrow \mathbb{R}$ is  convex function. Since problem \eqref{Init_problem} is a general problem formulation, we introduce standard assumptions and definitions that will narrow down the class of problems~under consideration.
    
    \begin{definition}[Gradient-free oracle]\label{def:GFOracle}
        Gradient-free oracle returns a function value $f(x,\xi)$ at the requested point $x$ with some adversarial stochastic noise, i.e. for all $x \in Q$
        \begin{equation*}
            f_\delta(x, \xi) := f(x,\xi) + \delta.
        \end{equation*}
    \end{definition}
    \begin{assumption}[Lipschitz continuity of objective function]\label{ass:Lipschitz_continuity}
        The function $f(x, \xi)$ is an $M$-Lipschitz continuous function in the $l_p$-norm, i.e for all $x, y \in Q$ we have
        \begin{equation*}
            |f(y,\xi) - f(x, \xi)| \leq M(\xi) \| y-x \|_p.
        \end{equation*}
        Moreover, there is a positive constant $M$, which is defined in the following way: $\mathbb{E} \left[ M^2(\xi) \right]\leq M^2$. In particular, for $p = 2$ we use the notation $M_2$ for the Lipschitz constant.
    \end{assumption}
    \begin{assumption}[Convexity on the set $Q_\gamma$]\label{ass:convexity_Q_gamma} 
    Let $\gamma > 0$ a small number to be defined later and $Q_\gamma := Q + B_p^d(\gamma)$, then the function $f$ is convex on the set $Q_\gamma$.
    \end{assumption}
    The following assumption we need to solve problem \eqref{Init_problem} in a smooth setting.
    \begin{assumption}[Smoothness of function]\label{ass:Smoothness_of_function}
        The function $f$ is smooth, that is, differentiable on $Q$ and such that for all $x,y \in Q$ with $L > 0$ we have
        \begin{equation*}
            \| \nabla f(y) - \nabla f(x) \|_q \leq L \| y - x \|_p.
        \end{equation*}
    \end{assumption}
    Next, we introduce the assumption about adversarial noise.
    \begin{assumption}[Adversarial noise]\label{ass:Adversarial_noise}
        It holds, that the random variables $\delta_1$ and $\delta_2$ are independent from $e \in S_p^d(1)$ as well as $\mathbb{E}\left[ \delta_1^2 \right] \leq \Delta^2$ and $\mathbb{E}\left[ \delta_2^2 \right]~\leq~\Delta^2$.
    \end{assumption}
    
    Our Assumption \ref{ass:Lipschitz_continuity} is necessary for theoretical proofs in each setting: smooth and non-smooth. This Assumption \ref{ass:Lipschitz_continuity} is common in literature (see e.g. \cite{Gasnikov_ICML,Lobanov_2023}). Assumption \ref{ass:convexity_Q_gamma} is standard for works using smoothing technique (see e.g.~\cite{Risteski_2016,Lobanov_2022}). Assumption \ref{ass:Smoothness_of_function} was introduced only for smooth tuning, and is also often found in the literature (e.g., in previous works such as \cite{Bach_2016,Akhavan_2020}). Whereas Definition \ref{def:GFOracle} is similar to \cite{Gasnikov_2022}, only using adversarial stochastic noise instead of adversarial deterministic noise. Finally, Assumption \ref{ass:Adversarial_noise} is the same as in the previous work~\cite{Akhavan_2022}.
    
    \subsection*{Notation}
    We use $\dotprod{x}{y}:= \sum_{i=1}^{d} x_i y_i$ to denote standard inner product of $x,y \in \mathbb{R}^d$, where $x_i$ and $y_i$ are the $i$-th component of $x$ and $y$ respectively. We denote $l_p$-norms (for~$p \geq 1$) in $\mathbb{R}^d$ as $\| x\|_p := \left( \sum_{i=1}^d |x_i|^p \right)^{1/p}$. Particularly for $l_2$-norm in $\mathbb{R}^d$ it follows $\| x \|_2 := \sqrt{\dotprod{x}{x}}$. We denote $l_p$-ball as $B_p^d(r):=\left\{ x \in \mathbb{R}^d : \| x \|_p \leq r \right\}$ and $l_p$-sphere as $S_p^d(r):=\left\{ x \in \mathbb{R}^d : \| x \|_p = r \right\}$. Operator $\mathbb{E}[\cdot]$ denotes full mathematical expectation. To denote the distance between the initial point $x^0$ and the solution of the initial problem $x_*$ we introduce $R := \mathcal{\tilde{O}}\left( \| x^0 - x_*\|_p\right)$, where we notation $\mathcal{\tilde{O}} (\cdot)$ to hide logarithmic factors.

\section{Main Result}\label{Section:Main_Result}
    In this section, we build our narrative on solving the black-box optimization problem \eqref{Init_problem} in a non-smooth setting. We will discuss the smooth setting as a special case of the non-smooth setting (presence of Assumption \ref{ass:Smoothness_of_function}) at the end of the section in Remark \ref{Remark}. This section is organized as follows: in Subsection~\ref{subsection:L1_random} we introduce the smooth approximation of a non-smooth function and its properties, the gradient approximation by $L_1$ randomization and its properties, i.e. we describe the smoothing scheme via $L_1$ randomization. In Subsection \ref{subsection:L2_random}, we do the same and describe the smoothing scheme via $L_2$ randomization. And in Subsection~\ref{subsection:Maximum_level_noise} we present maximum allowed level of adversarial stochastic~noise. So we begin by describing the smoothing technique via $L_1$ randomization.
    
    \subsection{Smoothing scheme via $L_1$ randomization}\label{subsection:L1_random}
    Since problem \eqref{Init_problem} is non-smooth, we introduce the following smooth approximation of the non-smooth function:
    \begin{equation}
        \label{f_gamma}
        f_\gamma(x) := \mathbb{E}_{\Tilde{e}} \left[ f(x + \gamma \Tilde{e})\right],
    \end{equation}
    where $\gamma>0$ is a smoothing parameter, $\Tilde{e}$ is a random vector uniformly distributed on $B_{1}^d(1)$. Here $f(x):= \mathbb{E}\left[ f(x,\xi) \right]$. The following lemma provides the connection between the smoothed and the original function.
    \begin{lemma}\label{Lemma:L1_connect_f_with_f_gamma}
        Let Assumptions \ref{ass:Lipschitz_continuity}, \ref{ass:convexity_Q_gamma} it holds, then for all $x \in Q$ we have
        \begin{equation*}
            f(x)\leq f_\gamma(x) \leq f(x) + \frac{2}{\sqrt{d}}\gamma M_2.
        \end{equation*}
    \end{lemma}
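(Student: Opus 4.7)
The plan is to establish the two inequalities separately.

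For the lower bound $f(x) \leq f_\gamma(x)$, I would invoke Jensen's inequality. Since $\tilde e$ is uniformly distributed on the symmetric set $B_1^d(1)$, we have $\mathbb{E}[\tilde e] = 0$, and $x + \gamma \tilde e$ lies in $Q_\gamma$ almost surely because $\|\gamma \tilde e\|_1 \leq \gamma$. By Assumption \ref{ass:convexity_Q_gamma}, $f$ is convex on $Q_\gamma$, so Jensen gives
$$f_\gamma(x) = \mathbb{E}_{\tilde e}[f(x + \gamma \tilde e)] \geq f(x + \gamma\, \mathbb{E}[\tilde e]) = f(x).$$

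For the upper bound, I would apply Assumption \ref{ass:Lipschitz_continuity} in the $\ell_2$-norm (so $p=2$), pointwise in $\xi$ and $\tilde e$:
$$f(x+\gamma\tilde e,\xi) - f(x,\xi) \leq M(\xi)\,\gamma\,\|\tilde e\|_2.$$
Recalling $f(y) = \mathbb{E}_\xi[f(y,\xi)]$, I take expectations (using the independence of $\xi$ and $\tilde e$) to obtain
$$f_\gamma(x) - f(x) \leq \gamma\,\mathbb{E}[M(\xi)]\,\mathbb{E}[\|\tilde e\|_2].$$
Cauchy--Schwarz together with $\mathbb{E}[M(\xi)^2] \leq M_2^2$ yields $\mathbb{E}[M(\xi)] \leq M_2$, so only the geometric factor $\mathbb{E}[\|\tilde e\|_2]$ remains.

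The main obstacle is therefore the dimension-dependent estimate $\mathbb{E}[\|\tilde e\|_2] \leq 2/\sqrt{d}$ for $\tilde e$ uniform on $B_1^d(1)$. My plan is: (i) pass to the second moment by Jensen, $\mathbb{E}[\|\tilde e\|_2] \leq \sqrt{\mathbb{E}[\|\tilde e\|_2^2]}$; (ii) use the radial decomposition $\tilde e = R\,U$, where $R = \|\tilde e\|_1$ has density $d\,r^{d-1}$ on $[0,1]$ (giving $\mathbb{E}[R^2] = d/(d+2)$) and $U$ is independent and uniform on $S_1^d(1)$; (iii) estimate $\mathbb{E}[\|U\|_2^2]$ via the standard exponential--Rademacher representation $U_i = \epsilon_i g_i / \sum_j g_j$, with $g_j$ i.i.d.\ $\mathrm{Exp}(1)$ and independent Rademacher signs $\epsilon_i$, so that by symmetry $\mathbb{E}[\|U\|_2^2] = d\,\mathbb{E}[g_1^2/(\sum_j g_j)^2]$, which is of order $1/d$. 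Multiplying the two factors gives $\mathbb{E}[\|\tilde e\|_2^2] = O(1/d)$; tracking the absolute constants carefully (or invoking the analogous estimate already developed for $L_1$ randomization in \cite{Akhavan_2022}) produces the sharpened constant $2/\sqrt{d}$ claimed in the lemma.
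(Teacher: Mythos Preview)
Your proposal is correct and follows essentially the same route as the paper: Jensen (equivalently, the convexity/subgradient inequality with $\mathbb{E}[\tilde e]=0$) for the lower bound, and $M_2$-Lipschitzness together with the geometric estimate $\mathbb{E}_{\tilde e}[\|\tilde e\|_2]\le 2/\sqrt{d}$ for the upper bound. The paper simply cites \cite{Akhavan_2022} for this last inequality, whereas you additionally sketch a self-contained derivation via the radial decomposition and the exponential representation of the uniform law on $S_1^d(1)$; your computation in fact yields $\mathbb{E}[\|\tilde e\|_2^2]=\tfrac{2d}{(d+1)(d+2)}\le 2/d$, which comfortably implies the stated $2/\sqrt{d}$ bound.
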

    \begin{proof}
    For the first inequality we use the convexity of the function $f(x)$
    \begin{equation*}
        f_\gamma(x) = \mathbb{E}_{\Tilde{e}} \left[ f(x + \gamma \Tilde{e}) \right] \geq \mathbb{E}_{\Tilde{e}} \left[ f(x) + \dotprod{\nabla f(x)}{\gamma \Tilde{e}}) \right] = \mathbb{E}_{\Tilde{e}} \left[ f(x) \right] = f(x).
    \end{equation*}
    For the second inequality, applying Lemma 1 of \cite{Akhavan_2022} = \circledOne, we have
    \begin{eqnarray*}
        | f_\gamma (x)- f(x) | = | \mathbb{E}_{\Tilde{e}} \left[ f(x + \gamma \Tilde{e}) \right] - f(x) | &\leq& \mathbb{E}_{\Tilde{e}} \left[ | f(x + \gamma \Tilde{e}) - f(x) | \right]\\
        &\leq& \gamma M_2 \mathbb{E}_{\Tilde{e}} \left[ \| \Tilde{e} \|_2 \right] \overset{\circledOne}{\leq} \frac{2}{\sqrt{d}} \gamma M_2,
    \end{eqnarray*}

    using the fact that $f$ is $M_2$-Lipschitz function.
    \end{proof}\qed    
    
    The following lemmas confirm that the Lipschitz continuity property holds and provide the Lipschitz constant of gradient for the smoothed function.
    
    \begin{lemma}\label{Lemma:L1_M_lipschitz_continuity}
        Let Assumptions \ref{ass:Lipschitz_continuity}, \ref{ass:convexity_Q_gamma} it holds, then for $f_\gamma(x)$ from \eqref{f_gamma} we have
        \begin{equation*}
            |f_\gamma(y) - f_\gamma(x) | \leq M \| y - x \|_p, \;\;\; \forall x,y \in Q.
        \end{equation*}
    \end{lemma}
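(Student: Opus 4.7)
The plan is to show Lipschitz continuity of $f_\gamma$ transfers from the Lipschitz continuity of $f$ via expectation, exploiting that the smoothing only shifts the argument by a random vector $\gamma\tilde e$ independent of $x,y$.

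First I would write, for arbitrary $x,y\in Q$,
\begin{equation*}
    f_\gamma(y)-f_\gamma(x)=\mathbb{E}_{\tilde e}\bigl[f(y+\gamma\tilde e)-f(x+\gamma\tilde e)\bigr],
\end{equation*}
and then apply Jensen's inequality to pull the absolute value inside the expectation:
\begin{equation*}
    |f_\gamma(y)-f_\gamma(x)|\leq \mathbb{E}_{\tilde e}\bigl[|f(y+\gamma\tilde e)-f(x+\gamma\tilde e)|\bigr].
\end{equation*}
Note the shifted arguments $y+\gamma\tilde e$ and $x+\gamma\tilde e$ both lie in $Q_\gamma$ since $\|\gamma\tilde e\|_1\leq \gamma$, so Assumption~\ref{ass:convexity_Q_gamma} and Assumption~\ref{ass:Lipschitz_continuity} apply there.

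Next I would promote the Lipschitz estimate from $f(\cdot,\xi)$ to $f(\cdot)=\mathbb{E}_\xi[f(\cdot,\xi)]$. For any $u,v\in Q_\gamma$, Jensen gives $|f(v)-f(u)|\leq \mathbb{E}_\xi[|f(v,\xi)-f(u,\xi)|]\leq \mathbb{E}_\xi[M(\xi)]\,\|v-u\|_p$, and by Cauchy--Schwarz (or Jensen on $t\mapsto t^2$) together with $\mathbb{E}[M^2(\xi)]\leq M^2$ we obtain $\mathbb{E}_\xi[M(\xi)]\leq M$. Applying this with $u=x+\gamma\tilde e$ and $v=y+\gamma\tilde e$ gives, pointwise in $\tilde e$,
\begin{equation*}
    |f(y+\gamma\tilde e)-f(x+\gamma\tilde e)|\leq M\,\|(y+\gamma\tilde e)-(x+\gamma\tilde e)\|_p=M\,\|y-x\|_p.
\end{equation*}

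Finally I would plug this back into the Jensen bound above; the right-hand side is independent of $\tilde e$, so the expectation is trivial and yields $|f_\gamma(y)-f_\gamma(x)|\leq M\|y-x\|_p$, which is the claim. There is no serious obstacle here; the only subtle step is the passage from the random Lipschitz constant $M(\xi)$ to the deterministic $M$ via the second-moment bound, but this is a one-line Cauchy--Schwarz argument.
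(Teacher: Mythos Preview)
Your proof is correct and follows essentially the same approach as the paper: pull the absolute value inside the expectation over $\tilde e$ and use Lipschitz continuity of $f$ at the shifted points. You are in fact more careful than the paper, which simply invokes ``$M$-Lipschitz continuity of $f$'' without spelling out the passage from the random constant $M(\xi)$ in Assumption~\ref{ass:Lipschitz_continuity} to the deterministic $M$; your Cauchy--Schwarz step makes that explicit.
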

    \begin{proof} Using $M$-Lipschitz continuity of function $f$ we obtain
        \begin{eqnarray*}
            | f_\gamma (y)- f_\gamma(x) | \leq \mathbb{E}_{\Tilde{e}} \left[ | f(y + \gamma \Tilde{e})  - f(x + \gamma \Tilde{e}) | \right] \leq M \| y - x \|_p.
        \end{eqnarray*}
    \end{proof}\qed
    
    \begin{lemma}[Lemma 1, \cite{Lobanov_2022}]\label{Lemma:Lipschitz_gradient}
        Let Assumptions \ref{ass:Lipschitz_continuity}, \ref{ass:convexity_Q_gamma} it holds, then $f_\gamma(x)$ has $L_{f_\gamma} = \frac{dM}{\gamma}$-Lipschitz gradient
        \begin{equation*}
            \| \nabla f_\gamma(y) - \nabla f_\gamma(x) \|_q \leq L_{f_{\gamma}} \| y - x \|_p, \;\;\; \forall x,y \in Q.
        \end{equation*}
    \end{lemma}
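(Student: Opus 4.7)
The plan is to derive a boundary-integral representation of $\nabla f_\gamma$ via the divergence theorem and then bound its Lipschitz constant using only the $M$-Lipschitz continuity of $f$ and the geometry of the $l_1$-ball.

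First, I would rewrite the smoothed function as a volume average over a ball of radius $\gamma$,
\begin{equation*}
    f_\gamma(x) = \frac{1}{\mathrm{vol}(B_1^d(\gamma))} \int_{B_1^d(\gamma)} f(x+u)\, du,
\end{equation*}
differentiate under the integral using $\nabla_x f(x+u) = \nabla_u f(x+u)$, and apply the divergence theorem to rewrite the gradient as a surface integral over $S_1^d(\gamma)$:
\begin{equation*}
    \nabla f_\gamma(x) = \frac{1}{\mathrm{vol}(B_1^d(\gamma))} \int_{S_1^d(\gamma)} f(x+u)\, \nu(u)\, dS(u),
\end{equation*}
where $\nu(u)$ is the outward unit normal to the $l_1$-sphere. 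The normal is defined almost everywhere since the non-smooth edges form a codimension-$2$ set, and the non-smoothness of $f$ can be handled by a routine mollification/limit argument.

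Second, subtracting these representations at $y$ and $x$ and invoking Assumption~\ref{ass:Lipschitz_continuity} to bound $|f(y+u)-f(x+u)|\le M\|y-x\|_p$ yields, after pulling constants outside and applying the triangle inequality,
\begin{equation*}
    \|\nabla f_\gamma(y) - \nabla f_\gamma(x)\|_q \leq \frac{M\|y-x\|_p}{\mathrm{vol}(B_1^d(\gamma))} \int_{S_1^d(\gamma)} \|\nu(u)\|_q\, dS(u).
\end{equation*}

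Third, I would reduce the prefactor to a combinatorial calculation. The $l_1$-ball $B_1^d(\gamma)$ has volume $(2\gamma)^d/d!$, and its boundary decomposes into $2^d$ congruent $(d-1)$-simplex faces on each of which $\nu(u)$ is constant with entries $\pm 1/\sqrt{d}$. Summing the face contributions and simplifying the resulting surface-integral-to-volume ratio produces the advertised factor $d/\gamma$, giving $L_{f_\gamma} = dM/\gamma$.

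The main obstacle will be the bookkeeping in the third step: one must pin down the form of the $l_2$-unit outward normal on each of the $2^d$ faces of $S_1^d(\gamma)$, evaluate $\|\nu(u)\|_q$, and verify that the surface-area-to-volume ratio collapses to exactly $d/\gamma$ in the norms of interest. Justifying the divergence-theorem step for a merely Lipschitz (not $C^1$) $f$ also requires a brief approximation argument, but this is standard and does not affect the final constants.
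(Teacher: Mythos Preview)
The paper does not prove this lemma; it is quoted from \cite{Lobanov_2022} without argument, so there is no in-paper proof to compare against. I therefore evaluate your outline on its own.

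Steps 1 and 2 are correct and standard. The divergence-theorem representation does give
\[
\nabla f_\gamma(x)=\frac{1}{\mathrm{vol}(B_1^d(\gamma))}\int_{S_1^d(\gamma)} f(x+u)\,\nu(u)\,dS(u),
\]
and since on each face $\nu(u)=d^{-1/2}\,\mathrm{sign}(u)$, this reproduces the formula $\nabla f_\gamma(x)=\tfrac{d}{\gamma}\,\mathbb{E}_{e\sim S_1^d(1)}[f(x+\gamma e)\,\mathrm{sign}(e)]$ cited from \cite{Akhavan_2022}. The subtraction and the vector triangle inequality are also fine.

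The gap is in step 3: your claim that the prefactor ``collapses to exactly $d/\gamma$'' is false for general $p$. The $(d-1)$-dimensional surface area of $S_1^d(\gamma)$ is $2^d\sqrt{d}\,\gamma^{d-1}/(d-1)!$ while $\mathrm{vol}(B_1^d(\gamma))=(2\gamma)^d/d!$, so the surface-to-volume ratio equals $d^{3/2}/\gamma$. Combined with $\|\nu(u)\|_q=d^{1/q-1/2}$ (the normal has entries $\pm d^{-1/2}$), your bound reads
\[
\frac{1}{\mathrm{vol}(B_1^d(\gamma))}\int_{S_1^d(\gamma)}\|\nu(u)\|_q\,dS(u)=\frac{d^{\,1+1/q}}{\gamma},
\]
not $d/\gamma$. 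Equivalently, from the expectation form one has $\|\mathrm{sign}(e)\|_q=d^{1/q}$, so the triangle inequality gives only $L_{f_\gamma}\le d^{\,1+1/q}M/\gamma$. This matches the stated constant $dM/\gamma$ only when $q=\infty$ (i.e.\ $p=1$); for $p=2$ you obtain $d^{3/2}M/\gamma$, and for $p=\infty$ you obtain $d^{2}M/\gamma$. Thus your outline proves \emph{a} Lipschitz bound on $\nabla f_\gamma$, but not the one asserted, and the crude triangle inequality in step 2 cannot be sharpened to recover the missing factor without additional structure. If the target really is $dM/\gamma$ for all $(p,q)$, a different argument---or a check of exactly which norm pair \cite{Lobanov_2022} establishes this constant for---is required.
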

    
    The gradient of $f_\gamma(x,\xi)$ can be estimated by the following approximation:
    \begin{equation}\label{approximation_gradient_L_1}
        \nabla f_\gamma(x, \xi, e) = \frac{d}{2 \gamma} \left( f_{\delta_1}(x+ \gamma e, \xi) - f_{\delta_2}(x - \gamma e, \xi) \right) \text{sign}(e),
    \end{equation}
    where $f_\delta(x,\xi)$ is gradient-free oracle from Definition \ref{def:GFOracle}, $e$ is a random vector uniformly distributed on $S_1^d(\gamma)$. The following lemma provides properties of the gradient $\nabla f_\gamma(x,\xi,e)$.
    
    \begin{lemma}[Lemma 4, \cite{Akhavan_2022}]\label{Lemma:L1_sigma}
         Gradient $\nabla f_\gamma(x,\xi,e)$ has bounded variance (second moment) for all $x \in Q$
        \begin{equation*}
           \mathbb{E}_{\xi, e} \left[ \| \nabla f_\gamma (x, \xi, e) \|^2_q \right] \leq \kappa(p,d) \left( M_2^2 + \frac{d^2 \Delta^2}{12(1+\sqrt{2})^2 \gamma^2}  \right), 
        \end{equation*}
        where $1/p + 1/q = 1$ and 
        \begin{equation*}
            \kappa(p,d) = \kappa(p,d)  = 48(1+\sqrt{2})^2 d^{2 - \frac{2}{p}}.
        \end{equation*}
    \end{lemma}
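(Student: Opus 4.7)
The statement is cited directly as Lemma 4 of Akhavan et al. 2022, so my plan is mainly to reconstruct the argument one would use if it were not already in the literature. The natural decomposition is to split the gradient estimator into a \emph{signal} part and a \emph{noise} part by writing
\begin{equation*}
    \nabla f_\gamma(x,\xi,e) = \underbrace{\tfrac{d}{2\gamma}\bigl(f(x+\gamma e,\xi) - f(x-\gamma e,\xi)\bigr)\,\mathrm{sign}(e)}_{g_{\text{sig}}} + \underbrace{\tfrac{d}{2\gamma}(\delta_1-\delta_2)\,\mathrm{sign}(e)}_{g_{\text{noi}}},
\end{equation*}
and then apply the elementary inequality $\|a+b\|_q^2 \le 2\|a\|_q^2 + 2\|b\|_q^2$ to reduce the problem to bounding the two terms separately.

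For the signal part, I would apply Assumption~\ref{ass:Lipschitz_continuity} in $l_2$-norm (with constant $M_2(\xi)$) to the difference $|f(x+\gamma e,\xi)-f(x-\gamma e,\xi)| \le 2\gamma M_2(\xi)\|e\|_2$. Factoring out $\|\mathrm{sign}(e)\|_q^2$ and using the independence of $\xi$ from $e$ together with $\mathbb{E}[M_2^2(\xi)]\le M_2^2$, this reduces to controlling $\mathbb{E}_e\bigl[\|e\|_2^2 \,\|\mathrm{sign}(e)\|_q^2\bigr]$ for $e$ uniform on $S_1^d(1)$. For the noise part, independence of $(\delta_1,\delta_2)$ from $e$ (Assumption~\ref{ass:Adversarial_noise}) decouples the expectations, $\mathbb{E}[(\delta_1-\delta_2)^2] \le 4\Delta^2$, and since $|\mathrm{sign}(e_i)|\in\{0,1\}$ we immediately have $\|\mathrm{sign}(e)\|_q^2 \le d^{2/q} = d^{2-2/p}$.

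The main obstacle is the distributional calculation for the $L_1$-sphere in the signal term: one needs a sharp bound of the form $\mathbb{E}_e\bigl[\|e\|_2^2\,\|\mathrm{sign}(e)\|_q^2\bigr] \le \frac{c}{d^{2/p}}$ with an explicit numerical constant. This is where the somewhat unusual factor $48(1+\sqrt{2})^2$ originates — it comes from combining the standard moment estimate $\mathbb{E}\|e\|_2^2 \le \frac{(1+\sqrt{2})^2}{d^2}$ (derivable via a representation of the uniform law on $S_1^d(1)$ in terms of i.i.d.\ Laplace coordinates) with the $l_q$-norm bound on $\mathrm{sign}(e)$ and the numerical slack introduced by the $(a+b)^2 \le 2a^2+2b^2$ step. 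Collecting the two bounds, multiplying by the prefactor $d^2/(4\gamma^2)$, and identifying $\kappa(p,d)=48(1+\sqrt{2})^2 d^{2-2/p}$ yields the stated inequality, with the $M_2^2$ term arising from $g_{\text{sig}}$ and the $\frac{d^2\Delta^2}{12(1+\sqrt{2})^2\gamma^2}$ term from $g_{\text{noi}}$ after absorbing the noise constants into $\kappa$.
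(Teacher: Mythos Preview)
The paper does not supply its own proof of this lemma; it is quoted verbatim from Akhavan et al.\ (2022). So there is no in-paper argument to compare against, and the question is simply whether your reconstruction is sound.

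The decomposition into signal and noise parts, the handling of the noise term via independence and $\mathbb{E}[(\delta_1-\delta_2)^2]\le 4\Delta^2$, and the bound $\|\mathrm{sign}(e)\|_q^2 = d^{2/q}$ are all fine; they reproduce the $\frac{d^2\Delta^2}{12(1+\sqrt{2})^2\gamma^2}$ term after absorbing constants into $\kappa(p,d)$.

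The signal term, however, contains a real gap. Your key moment estimate $\mathbb{E}_e\|e\|_2^2 \le \frac{(1+\sqrt{2})^2}{d^2}$ is \emph{false}: for $e$ uniform on $S_1^d(1)$ one has (via the Dirichlet/Laplace representation) $\mathbb{E}[e_i^2]=\frac{2}{d(d+1)}$, hence $\mathbb{E}\|e\|_2^2=\frac{2}{d+1}\sim\frac{2}{d}$, a full factor of $d$ larger than you claim. Consequently, the crude Lipschitz bound $|f(x+\gamma e,\xi)-f(x-\gamma e,\xi)|\le 2\gamma M_2(\xi)\|e\|_2$ followed by $\mathbb{E}\|e\|_2^2$ yields a signal contribution of order $d^{3-2/p}M_2^2$, not the required $d^{2-2/p}M_2^2$.

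What makes the Akhavan et al.\ argument work is precisely that one does \emph{not} bound $|h(e)-h(-e)|$ pointwise by $2\gamma M_2\|e\|_2$. Instead one centers: by symmetry of $e$ and $-e$, $\mathbb{E}[(h(e)-h(-e))^2]\le 4\,\mathrm{Var}_e[h(e)]$, and then one invokes their Poincar\'e-type inequality on the $\ell_1$-sphere, which states that for any $L$-Lipschitz (in $\ell_2$) function $g$ and $e$ uniform on $S_1^d(1)$,
\[
\mathrm{Var}_e[g(e)] \;\le\; \frac{C\,L^2}{d^2},
\]
with an explicit constant $C$ involving $(1+\sqrt{2})$. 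This $1/d^2$ (rather than $1/d$) is the whole point of the $L_1$ construction and is the nontrivial ingredient you are missing. The crude second-moment bound cannot see the concentration that produces the extra $1/d$; you must pass through the variance/Poincar\'e step.
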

    
    \subsection{Smoothing scheme via $L_2$ randomization}\label{subsection:L2_random}
    Since problem \eqref{Init_problem} is non-smooth, we introduce the following smooth approximation of the non-smooth function:
    \begin{equation}
        \label{L2_f_gamma}
        \tilde{f}_\gamma(x) := \mathbb{E}_{\Tilde{e}} \left[ f(x + \gamma \Tilde{e})\right],
    \end{equation}
    where $\gamma>0$ is a smoothing parameter, $\Tilde{e}$ is a random vector uniformly distributed on $B_{2}^d(1)$. Here $f(x):= \mathbb{E}\left[ f(x,\xi) \right]$. The following lemma provides the connection between the smoothed and the original function.
    \begin{lemma}\label{Lemma:L2_connect_f_with_f_gamma}
        Let Assumptions \ref{ass:Lipschitz_continuity}, \ref{ass:convexity_Q_gamma} it holds, then for all $x \in Q$ we have
        \begin{equation*}
            f(x)\leq \tilde{f}_\gamma(x) \leq f(x) + \gamma M_2.
        \end{equation*}
    \end{lemma}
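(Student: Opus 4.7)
The proof plan essentially mirrors that of Lemma \ref{Lemma:L1_connect_f_with_f_gamma}, with the only structural change being that the randomization vector $\tilde{e}$ now lives in $B_2^d(1)$ rather than $B_1^d(1)$. I would split the argument into the two inequalities.

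For the lower bound $f(x) \leq \tilde{f}_\gamma(x)$, I would invoke convexity of $f$ on the enlarged set $Q_\gamma$ (Assumption \ref{ass:convexity_Q_gamma}), which is exactly where $x + \gamma \tilde{e}$ takes its values when $x \in Q$ and $\tilde{e} \in B_2^d(1)$. Since the uniform distribution on $B_2^d(1)$ is symmetric around the origin, $\mathbb{E}_{\tilde{e}}[\tilde{e}] = 0$, so Jensen's inequality (or equivalently the subgradient inequality applied inside the expectation) gives
\begin{equation*}
    \tilde{f}_\gamma(x) = \mathbb{E}_{\tilde{e}}\left[ f(x + \gamma \tilde{e}) \right] \geq f(x) + \dotprod{\nabla f(x)}{\gamma \, \mathbb{E}_{\tilde{e}}[\tilde{e}]} = f(x).
\end{equation*}

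For the upper bound, I would use $M_2$-Lipschitz continuity of $f$ in the $l_2$-norm (Assumption \ref{ass:Lipschitz_continuity} with $p=2$), combined with the trivial observation that $\|\tilde{e}\|_2 \leq 1$ for $\tilde{e} \in B_2^d(1)$:
\begin{equation*}
    |\tilde{f}_\gamma(x) - f(x)| \leq \mathbb{E}_{\tilde{e}}\left[ |f(x + \gamma \tilde{e}) - f(x)| \right] \leq \gamma M_2 \, \mathbb{E}_{\tilde{e}}\left[ \| \tilde{e} \|_2 \right] \leq \gamma M_2.
\end{equation*}

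There is no real obstacle here — the proof is essentially a two-line calculation. The only subtlety, and the reason this lemma gives $\gamma M_2$ rather than the sharper $(2/\sqrt{d})\gamma M_2$ obtained in Lemma \ref{Lemma:L1_connect_f_with_f_gamma}, is that for the $L_2$-ball we simply bound $\mathbb{E}[\|\tilde{e}\|_2]$ by $1$ rather than exploiting the $\ell_1$-to-$\ell_2$ norm inequality that gave the $1/\sqrt{d}$ improvement in the $L_1$ randomization setting. One should also take care to note that $M = M_2$ is used in the Lipschitz bound because the approximation error is being measured in the $l_2$-norm direction $\gamma \tilde{e}$, which is the same convention as in Lemma \ref{Lemma:L1_connect_f_with_f_gamma}.
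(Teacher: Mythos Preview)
Your proof is correct and follows essentially the same approach as the paper: convexity (via the subgradient inequality and $\mathbb{E}_{\tilde{e}}[\tilde{e}]=0$) for the lower bound, and $M_2$-Lipschitz continuity together with $\|\tilde{e}\|_2\le 1$ for the upper bound. The additional commentary on why no $1/\sqrt{d}$ factor appears here is accurate and a nice clarification, though not part of the paper's proof.
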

    \begin{proof}
    For the first inequality we use the convexity of the function $f(x)$
    \begin{equation*}
        \tilde{f}_\gamma(x) = \mathbb{E}_{\Tilde{e}} \left[ f(x + \gamma \Tilde{e}) \right] \geq \mathbb{E}_{\Tilde{e}} \left[ f(x) + \dotprod{\nabla f(x)}{\gamma \Tilde{e}}) \right] = \mathbb{E}_{\Tilde{e}} \left[ f(x) \right] = f(x).
    \end{equation*}
    For the second inequality we have
    \begin{eqnarray*}
        | \tilde{f}_\gamma (x)- f(x) | = | \mathbb{E}_{\Tilde{e}} \left[ f(x + \gamma \Tilde{e}) \right] - f(x) | &\leq& \mathbb{E}_{\Tilde{e}} \left[ | f(x + \gamma \Tilde{e}) - f(x) | \right]\\
        &\leq& \gamma M_2 \mathbb{E}_{\Tilde{e}} \left[ \| \Tilde{e} \|_2 \right] \leq \gamma M_2,
    \end{eqnarray*}

    using the fact that $f$ is $M_2$-Lipschitz function.
    \end{proof}\qed    
    
    The following lemmas confirm that the Lipschitz continuity property holds and provide the Lipschitz constant of gradient for the smoothed function.
    
    \begin{lemma}\label{Lemma:L1_M_lipschitz_continuity}
        Let Assumptions \ref{ass:Lipschitz_continuity}, \ref{ass:convexity_Q_gamma} it holds, then for $\tilde{f}_\gamma(x)$ from \eqref{f_gamma} we have
        \begin{equation*}
            |\tilde{f}_\gamma(y) - \tilde{f}_\gamma(x) | \leq M \| y - x \|_p, \;\;\; \forall x,y \in Q.
        \end{equation*}
    \end{lemma}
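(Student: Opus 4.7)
The plan is to mimic verbatim the argument used for the $L_1$-randomized smoothing (the analogous lemma in Subsection~\ref{subsection:L1_random}), because the only thing that differs between the two smoothing schemes is the distribution of the perturbation vector $\tilde{e}$, and the upcoming estimate does not use any property of that distribution beyond the fact that $\tilde{f}_\gamma$ is defined as an expectation of shifted values of $f$.

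Concretely, I would start from the definition \eqref{L2_f_gamma} and write
\[
\tilde{f}_\gamma(y) - \tilde{f}_\gamma(x) \;=\; \mathbb{E}_{\tilde{e}}\bigl[ f(y + \gamma \tilde{e}) - f(x + \gamma \tilde{e}) \bigr],
\]
then take absolute values and push them inside the expectation by the triangle/Jensen inequality. At this point the $\gamma \tilde{e}$ terms cancel inside each pairwise difference, so the perturbation distribution plays no role. Applying Assumption~\ref{ass:Lipschitz_continuity} to each realization $\xi$, together with the bound $\mathbb{E}[M^2(\xi)] \leq M^2$ (and Jensen's inequality to pass from $\mathbb{E}[M(\xi)]$ to $M$ via $\mathbb{E}[M(\xi)] \leq \sqrt{\mathbb{E}[M^2(\xi)]} \leq M$), one obtains
\[
\bigl|\tilde{f}_\gamma(y) - \tilde{f}_\gamma(x)\bigr| \;\leq\; \mathbb{E}_{\tilde{e}}\bigl[\, M \,\| y - x \|_p \bigr] \;=\; M \| y - x \|_p.
\]

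One small bookkeeping point is that in Assumption~\ref{ass:Lipschitz_continuity} the Lipschitz constant is really $M(\xi)$ with $\mathbb{E}[M^2(\xi)] \leq M^2$, and $f(x) = \mathbb{E}_\xi[f(x,\xi)]$; so strictly speaking one should first write $f(y+\gamma\tilde{e}) - f(x+\gamma\tilde{e}) = \mathbb{E}_\xi[f(y+\gamma\tilde{e},\xi) - f(x+\gamma\tilde{e},\xi)]$ and interchange expectations (both are with respect to independent randomness, so Fubini applies) before invoking the per-realization Lipschitz bound. This is the only mildly delicate step, but it is purely formal.

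There is no real obstacle here: the argument is three lines, and it is essentially the $L_2$-randomization copy of the proof already given for Lemma~\ref{Lemma:L1_M_lipschitz_continuity} in the $L_1$ case. The key observation to emphasize is that the Lipschitz constant of $\tilde{f}_\gamma$ is the same $M$ as that of $f$, independent of the smoothing parameter $\gamma$ and independent of whether randomization is done over $B_1^d(1)$ or $B_2^d(1)$, because the shift $\gamma \tilde{e}$ appears identically in both arguments of $f$ inside the expectation and hence cancels out of the Lipschitz estimate.
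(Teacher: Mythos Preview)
Your proposal is correct and follows exactly the paper's approach: push the absolute value inside the expectation and apply the $M$-Lipschitz property of $f$ to the shifted arguments, so that the $\gamma\tilde{e}$ cancels. Your extra care about passing from $M(\xi)$ to $M$ via Jensen and interchanging the $\xi$ and $\tilde{e}$ expectations is in fact more detailed than the paper's three-line proof, which simply invokes ``$M$-Lipschitz continuity of $f$'' directly.
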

    \begin{proof} Using $M$-Lipschitz continuity of function $f$ we obtain
        \begin{eqnarray*}
            | \tilde{f}_\gamma (y)- \tilde{f}_\gamma(x) | \leq \mathbb{E}_{\Tilde{e}} \left[ | f(y + \gamma \Tilde{e})  - f(x + \gamma \Tilde{e}) | \right] \leq M \| y - x \|_p.
        \end{eqnarray*}
    \end{proof}\qed
    
    \begin{lemma}[Theorem 1, \cite{Gasnikov_2022}]\label{Lemma:L2_Lipschitz_gradient}
        Let Assumptions \ref{ass:Lipschitz_continuity}, \ref{ass:convexity_Q_gamma} it holds, then $\tilde{f}_\gamma(x)$ has $L_{\tilde{f}_\gamma} = \frac{\sqrt{d}M}{\gamma}$-Lipschitz gradient
        \begin{equation*}
            \| \nabla \tilde{f}_\gamma(y) - \nabla \tilde{f}_\gamma(x) \|_q \leq L_{\tilde{f}_{\gamma}} \| y - x \|_p, \;\;\; \forall x,y \in Q.
        \end{equation*}
    \end{lemma}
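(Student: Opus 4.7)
The plan is to reduce the statement to the standard Stokes-type representation of the gradient of an $L_2$ smoothing. First, I would rewrite $\tilde{f}_\gamma(x) = \operatorname{Vol}(B_2^d(\gamma))^{-1}\!\int_{B_2^d(\gamma)} f(x+u)\,du$ and apply the divergence theorem to get
$$\nabla \tilde{f}_\gamma(x) \;=\; \frac{d}{\gamma}\,\mathbb{E}_{e \sim \mathrm{Unif}(S_2^d(1))}\bigl[f(x+\gamma e)\,e\bigr].$$
Subtracting this identity at $x$ and $y$ yields
$$\nabla \tilde{f}_\gamma(y) - \nabla \tilde{f}_\gamma(x) \;=\; \frac{d}{\gamma}\,\mathbb{E}_{e}\!\bigl[\bigl(f(y+\gamma e) - f(x+\gamma e)\bigr)\,e\bigr].$$
If $f$ is not differentiable one justifies this representation via a standard mollification argument, passing to the limit using Lemma~\ref{Lemma:L1_M_lipschitz_continuity}.

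Next, I would evaluate the $l_q$-norm by duality: for an arbitrary $v$ with $\|v\|_p = 1$,
$$\langle v,\, \nabla \tilde{f}_\gamma(y) - \nabla \tilde{f}_\gamma(x)\rangle \;=\; \frac{d}{\gamma}\,\mathbb{E}_e\!\bigl[\bigl(f(y+\gamma e) - f(x+\gamma e)\bigr)\,\langle v, e\rangle\bigr].$$
Assumption~\ref{ass:Lipschitz_continuity} applied pointwise in $e$ gives $|f(y+\gamma e)-f(x+\gamma e)| \leq M\|y-x\|_p$, so the right-hand side is bounded by $(dM/\gamma)\|y-x\|_p \cdot \mathbb{E}_e[\,|\langle v, e\rangle|\,]$. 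Taking the supremum over $v$ reduces the lemma to an estimate of the scalar moment $\mathbb{E}_e[\,|\langle v, e\rangle|\,]$ on the Euclidean sphere.

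The main step, and the place where the $\sqrt{d}$ (rather than the trivial $d$) arises, is this moment bound. I would use the isotropy identity $\mathbb{E}_e[ee^{\top}] = d^{-1}I$, giving $\mathbb{E}_e[\langle v,e\rangle^2] = \|v\|_2^2/d$, and then Jensen's inequality yields $\mathbb{E}_e[\,|\langle v,e\rangle|\,] \leq \|v\|_2/\sqrt{d}$. In the relevant regime $p=2$ this equals $1/\sqrt{d}$, producing exactly the advertised constant $L_{\tilde{f}_\gamma} = \sqrt{d}M/\gamma$.

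The main obstacle I anticipate is precisely this last step: the naive estimate $|\langle v,e\rangle| \leq \|v\|_p\|e\|_q$ combined with $\mathbb{E}_e\|e\|_q$ only recovers the weaker constant $dM/\gamma$, so the second-moment (isotropy) argument is essential. Secondary care is needed to justify the Stokes identity without assuming differentiability of $f$, but this is routine via convolution with a smooth mollifier and passage to the limit, using the uniform Lipschitz bound from Assumption~\ref{ass:Lipschitz_continuity}.
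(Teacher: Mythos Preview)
The paper does not prove this lemma; it simply cites Theorem~1 of \cite{Gasnikov_2022}. Your approach---the Stokes/divergence representation of $\nabla\tilde f_\gamma$ followed by the isotropy bound $\mathbb{E}_e[\langle v,e\rangle^2]=\|v\|_2^2/d$---is exactly the standard argument behind that cited result, so in that sense you are reproducing the intended proof rather than offering an alternative.

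There is, however, a genuine gap relative to the statement as written. The lemma is formulated in arbitrary $l_p/l_q$ norms with the constant $\sqrt{d}\,M/\gamma$, but your isotropy step delivers $\mathbb{E}_e[|\langle v,e\rangle|]\le \|v\|_2/\sqrt{d}$, and you then need $\sup_{\|v\|_p=1}\|v\|_2\le 1$. That holds for $1\le p\le 2$ (in particular for the ``relevant'' case $p=2$ you single out), but fails for $p>2$, where $\sup_{\|v\|_p=1}\|v\|_2=d^{1/2-1/p}$ and your bound degrades to $d^{1-1/p}M/\gamma$. You flag this yourself by restricting to $p=2$; just be explicit that your argument actually covers the full range $p\in[1,2]$, and that for $p>2$ either a different argument or a different constant is needed---the lemma as stated in the paper is likely intended only for $p\le 2$, matching the Euclidean setting of the cited source. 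The mollification remark for non-differentiable $f$ is fine and routine.
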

    
    The gradient of $\tilde{f}_\gamma(x,\xi)$ can be estimated by the following approximation:
    \begin{equation}\label{approximation_gradient_L_2}
        \nabla \tilde{f}_\gamma(x, \xi, e) = \frac{d}{2 \gamma} \left( f_{\delta_1}(x+ \gamma e, \xi) - f_{\delta_2}(x - \gamma e, \xi) \right) e,
    \end{equation}
    where $f_\delta(x,\xi)$ is gradient-free oracle from Definition \ref{def:GFOracle}, $e$ is a random vector uniformly distributed on $S_2^d(\gamma)$. The following lemma provides properties of the gradient $\nabla \tilde{f}_\gamma(x,\xi,e)$.
    
    \begin{lemma}[\cite{Shamir_2017,Lobanov_2022}]\label{Lemma:L2_sigma}
         Gradient $\nabla \tilde{f}_\gamma(x,\xi,e)$ has bounded variance (second moment) for all $x \in Q$
        \begin{equation*}
           \mathbb{E}_{\xi, e} \left[ \| \nabla \tilde{f}_\gamma (x, \xi, e) \|^2_q \right] \leq \kappa(p,d) \left( d M_2^2 + \frac{d^2 \Delta^2}{\sqrt{2} \gamma^2}  \right), 
        \end{equation*}
        where $1/p + 1/q = 1$ and 
        \begin{equation*}
            \kappa(p,d) = \sqrt{2} \min \left\{ q, \ln d \right\} d^{1-\frac{2}{p}}.
        \end{equation*}
    \end{lemma}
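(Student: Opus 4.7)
The plan is to decompose the gradient approximation \eqref{approximation_gradient_L_2} into a signal part (finite difference of the true objective) and a noise part (involving $\delta_1-\delta_2$), take the $l_q$-norm squared, and then invoke the standard sharp moment bound for $\|e\|_q$ on the Euclidean unit sphere.

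First I would substitute Definition~\ref{def:GFOracle} into \eqref{approximation_gradient_L_2} to write
\begin{equation*}
\nabla\tilde f_\gamma(x,\xi,e)=\frac{d}{2\gamma}\bigl[(f(x+\gamma e,\xi)-f(x-\gamma e,\xi))+(\delta_1-\delta_2)\bigr]\,e.
\end{equation*}
Taking $\|\cdot\|_q^2$ pulls out a scalar factor $\|e\|_q^2$, and the inequality $(u+v)^2\le 2u^2+2v^2$ separates the expectation into
\begin{equation*}
\mathbb{E}_{\xi,e}\|\nabla\tilde f_\gamma\|_q^2\le \tfrac{d^2}{2\gamma^2}\,\mathbb{E}\bigl[(f(x+\gamma e,\xi)-f(x-\gamma e,\xi))^2\|e\|_q^2\bigr]+\tfrac{d^2}{2\gamma^2}\,\mathbb{E}\bigl[(\delta_1-\delta_2)^2\|e\|_q^2\bigr].
\end{equation*}

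For the signal term, Assumption~\ref{ass:Lipschitz_continuity} in the $l_2$-norm together with $\|e\|_2=1$ gives $|f(x+\gamma e,\xi)-f(x-\gamma e,\xi)|\le 2\gamma M_2(\xi)$; integrating in $\xi$ via $\mathbb{E}[M_2^2(\xi)]\le M_2^2$ leaves a purely geometric factor $\mathbb{E}_e\|e\|_q^2$. For the noise term, Assumption~\ref{ass:Adversarial_noise} (independence of $\delta_1,\delta_2$ from $e$ and $\mathbb{E}[\delta_i^2]\le\Delta^2$) combined with $(\delta_1-\delta_2)^2\le 2\delta_1^2+2\delta_2^2$ once again reduces everything to $\mathbb{E}_e\|e\|_q^2$. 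Both contributions therefore rest on a single integral over the Euclidean sphere.

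The only substantive remaining step is the sharp estimate of $\mathbb{E}_e\|e\|_q^2$ for $e$ uniform on $S_2^{d-1}$, which, when multiplied by $d^2/\gamma^2$, yields precisely the announced coefficients $dM_2^2$ and $d^2\Delta^2/(\sqrt2\gamma^2)$ with prefactor $\kappa(p,d)=\sqrt2\min\{q,\ln d\}\,d^{1-2/p}$. I expect this to be the main obstacle: the naive route (Jensen applied to $\mathbb{E}\|e\|_q^q$) only delivers the polynomial $d^{1-2/p}$ bound, whereas the logarithmic refinement $\min\{q,\ln d\}$ requires either a Khintchine-type inequality on the sphere or a Gaussian comparison argument, as in \cite{Shamir_2017}. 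I would therefore cite this moment bound directly from \cite{Shamir_2017,Lobanov_2022} rather than reproving it, then combine it with the two contributions above and track constants to obtain the stated inequality.
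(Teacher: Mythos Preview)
The paper does not give its own proof of this lemma; it simply cites \cite{Shamir_2017,Lobanov_2022}. That said, your proposed argument contains a genuine gap in the signal term that no choice of the moment bound on $\mathbb{E}_e\|e\|_q^2$ can close.

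After you bound $|f(x+\gamma e,\xi)-f(x-\gamma e,\xi)|\le 2\gamma M_2(\xi)$ pointwise and take expectations, the signal contribution becomes
\[
\frac{d^{2}}{2\gamma^{2}}\,(2\gamma)^{2}M_2^{2}\,\mathbb{E}_e\|e\|_q^{2}\;=\;2d^{2}M_2^{2}\,\mathbb{E}_e\|e\|_q^{2}.
\]
Specialize to $p=q=2$: since $\mathbb{E}_e\|e\|_2^{2}=1$ exactly, your bound is $2d^{2}M_2^{2}$, whereas the lemma asserts $\kappa(2,d)\,dM_2^{2}=\sqrt{2}\min\{2,\ln d\}\,dM_2^{2}\le 2\sqrt{2}\,dM_2^{2}$. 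You are off by a full factor of $d$, and this cannot be fixed by sharpening the estimate of $\mathbb{E}_e\|e\|_q^{2}$, which you identified as the ``main obstacle''.

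The missing ingredient is concentration of Lipschitz functions on the Euclidean sphere, which is the actual substance of the result in \cite{Shamir_2017}. The map $e\mapsto f(x+\gamma e,\xi)-f(x-\gamma e,\xi)$ is $2\gamma M_2(\xi)$-Lipschitz on $S_2^{d}(1)$ and, being odd in $e$, has mean zero; L\'evy's inequality (equivalently, the spherical Poincar\'e/log-Sobolev inequality) then makes it sub-Gaussian with proxy variance of order $\gamma^{2}M_2^{2}(\xi)/d$. Feeding this into the second moment recovers exactly the extra $1/d$ that converts $d^{2}M_2^{2}$ into $dM_2^{2}$. Your treatment of the noise term, by contrast, is correct: there is no concentration to exploit because $\delta_1-\delta_2$ is independent of $e$, and indeed your bound $\tfrac{2d^{2}\Delta^{2}}{\gamma^{2}}\mathbb{E}_e\|e\|_q^{2}$ matches the claimed $\kappa(p,d)\,d^{2}\Delta^{2}/(\sqrt{2}\gamma^{2})$ up to absolute constants once the sphere moment bound $\mathbb{E}_e\|e\|_q^{2}\lesssim \min\{q,\ln d\}\,d^{2/q-1}$ is inserted.
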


    \subsection{Maximum level of adversarial stochastic noise}\label{subsection:Maximum_level_noise}
    In this subsection, we present our main result, namely the optimal bounds in terms of the maximum allowable level of the adversarial stochastic noise for smoothing techniques discussed in Subsections \ref{subsection:L1_random} and \ref{subsection:L2_random}. Next, we will consider case when $\Delta > 0$, i.e., there is adversarial noise. Then, before writing down main theorem, let us show that the gradient approximations \eqref{approximation_gradient_L_1} and \eqref{approximation_gradient_L_2} are unbiased.
    
    \begin{itemize}
        \item[$\bullet$] Gradient approximation \eqref{approximation_gradient_L_1} is unbiased:
        \begin{eqnarray*}
            \mathbb{E}_{e,\xi}\left[ \nabla f_\gamma(x, \xi, e)\right] &=& \mathbb{E}_{e,\xi}\left[\frac{d}{2 \gamma} \left( f_{\delta_1}(x+ \gamma e, \xi) - f_{\delta_2}(x - \gamma e, \xi) \right) \text{sign}(e)\right]\\
            &=& \mathbb{E}_{e,\xi}\left[\frac{d}{2 \gamma} \left( f(x+ \gamma e, \xi) + \delta_1 - f(x - \gamma e, \xi) - \delta_2 \right) \text{sign}(e)\right]\\
            &\overset{\circledTwo}{=}& \mathbb{E}_{e,\xi}\left[\frac{d}{2 \gamma} \left( f(x+ \gamma e, \xi) - f(x - \gamma e, \xi) \right) \text{sign}(e)\right]\\
            &=& \mathbb{E}_{e}\left[\frac{d}{2 \gamma} \left( f(x+ \gamma e) - f(x - \gamma e) \right) \text{sign}(e)\right]\\
            &\overset{\circledThree}{=}& \nabla f_\gamma(x),
        \end{eqnarray*}
        where $\circledTwo$ = we assume that Assumption \ref{ass:Adversarial_noise} is satisfied, $\circledThree$ = Lemma 1 \cite{Akhavan_2022}.
        
        \item[$\bullet$] Gradient approximation \eqref{approximation_gradient_L_2} is unbiased:
        \begin{eqnarray*}
            \mathbb{E}_{e,\xi}\left[ \nabla \tilde{f}_\gamma(x, \xi, e)\right] &=& \mathbb{E}_{e,\xi}\left[\frac{d}{2 \gamma} \left( f_{\delta_1}(x+ \gamma e, \xi) - f_{\delta_2}(x - \gamma e, \xi) \right) e\right]\\
            &=& \mathbb{E}_{e,\xi}\left[\frac{d}{2 \gamma} \left( f(x+ \gamma e, \xi) + \delta_1 - f(x - \gamma e, \xi) - \delta_2 \right) e\right]\\
            &\overset{\circledTwo}{=}& \mathbb{E}_{e,\xi}\left[\frac{d}{2 \gamma} \left( f(x+ \gamma e, \xi) - f(x - \gamma e, \xi) \right) e\right]\\
            &=& \mathbb{E}_{e}\left[\frac{d}{2 \gamma} \left( f(x+ \gamma e) - f(x - \gamma e) \right) e\right]\\
            &\overset{\circledThree}{=}& \nabla \tilde{f}_\gamma(x),
        \end{eqnarray*}
        where $\circledTwo$ = we assume that Assumption \ref{ass:Adversarial_noise} is satisfied, $\circledThree$ = Theorem 2.2 \cite{Gasnikov_ICML}.
    \end{itemize}
    
    Since the adversarial noise does not accumulate in the bias (since at $\Delta>0$ the gradient approximation is unbiased), the maximum allowable level of adversarial stochastic noise will only accumulate in the variance. Then the following Theorem \ref{Theorem} presents the optimal estimates for adversarial noise.
    
    \begin{theorem}\label{Theorem}
        Let Assumptions \ref{ass:Lipschitz_continuity},\ref{ass:convexity_Q_gamma},\ref{ass:Adversarial_noise} be satisfied, then the algorithm \textbf{A}$(L, \sigma^2)$ obtained by applying smoothing schemes (see Subsections \ref{subsection:L1_random} and \ref{subsection:L2_random}) based on the first order method 
        \begin{enumerate}
            \item for Smoothing scheme via $L_1$ randomization has level of adversarial noise
            \begin{equation*}
                \Delta \lesssim \frac{\varepsilon}{\sqrt{d}};
            \end{equation*}
            
            \item for Smoothing scheme via $L_2$ randomization has level of adversarial noise
            
            \begin{equation*}
                \Delta \lesssim \frac{\varepsilon}{\sqrt{d}}, 
            \end{equation*}
        \end{enumerate}
        where $\varepsilon$ is accuracy solution to problem \eqref{Init_problem}, $\mathbb{E}[f(x_N)] - f^* \leq \varepsilon$.
    \end{theorem}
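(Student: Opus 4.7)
\medskip
\noindent\textbf{Proof plan for Theorem \ref{Theorem}.} The strategy is to plug the smoothing schemes of Subsections \ref{subsection:L1_random} and \ref{subsection:L2_random} into a generic accelerated stochastic first-order method \textbf{A}$(L,\sigma^2)$, then pick the smoothing parameter $\gamma$ so that the bias is absorbed into $\varepsilon$, and finally require that the noise-contribution to $\sigma^2$ does not blow up the oracle complexity. I would carry this out as follows.

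\medskip
\noindent\emph{Step 1: convergence of \textbf{A}$(L,\sigma^2)$ on the smoothed surrogate.} Since the gradient approximation $\nabla f_\gamma(x,\xi,e)$ (resp.\ $\nabla \tilde f_\gamma(x,\xi,e)$) is unbiased (as just verified using Assumption \ref{ass:Adversarial_noise} together with Lemma~1 of \cite{Akhavan_2022} / Theorem~2.2 of \cite{Gasnikov_ICML}), running \textbf{A}$(L,\sigma^2)$ on the surrogate yields a bound of the standard form
\begin{equation*}
    \mathbb{E}[f_\gamma(x_N)] - f_\gamma^{*} \;\lesssim\; \frac{L R^2}{N^2} + \frac{\sigma R}{\sqrt{N}},
\end{equation*}
with $L$ taken from Lemma \ref{Lemma:Lipschitz_gradient} (resp.\ Lemma \ref{Lemma:L2_Lipschitz_gradient}) and $\sigma^2$ taken from Lemma \ref{Lemma:L1_sigma} (resp.\ Lemma \ref{Lemma:L2_sigma}). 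By Lemma \ref{Lemma:L1_connect_f_with_f_gamma} (resp.\ Lemma \ref{Lemma:L2_connect_f_with_f_gamma}), the left-hand side differs from $\mathbb{E}[f(x_N)] - f^{*}$ by at most $\tfrac{2}{\sqrt{d}}\gamma M_2$ (resp.\ $\gamma M_2$), so it suffices to make all three terms $\lesssim\varepsilon$.

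\medskip
\noindent\emph{Step 2: choose $\gamma$ to kill the bias.} From the bias inequalities of Lemmas \ref{Lemma:L1_connect_f_with_f_gamma}/\ref{Lemma:L2_connect_f_with_f_gamma} I would take
\begin{equation*}
    \gamma \;\asymp\; \frac{\varepsilon\sqrt{d}}{M_2} \quad (L_1\text{ scheme}), \qquad \gamma \;\asymp\; \frac{\varepsilon}{M_2}\quad (L_2\text{ scheme}).
\end{equation*}
These are the largest values of $\gamma$ compatible with accuracy $\varepsilon$; a larger $\gamma$ makes the smoothed surrogate too far from $f$, a smaller $\gamma$ needlessly inflates $L$ and $\sigma$.

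\medskip
\noindent\emph{Step 3: separate noise-free and adversarial contributions to $\sigma^2$.} Splitting the variance bounds of Lemmas \ref{Lemma:L1_sigma}/\ref{Lemma:L2_sigma} as $\sigma^2 = \sigma_0^2 + \sigma_\Delta^2$, the oracle complexity $N \gtrsim \sigma_0^2 R^2/\varepsilon^2$ is governed only by the noise-free term $\sigma_0^2$. To ensure the adversarial term does not degrade this complexity, I would impose $\sigma_\Delta^2 \lesssim \sigma_0^2$, i.e.\
\begin{equation*}
    \frac{d^{2}\Delta^{2}}{\gamma^{2}} \;\lesssim\; M_2^{2}\quad (L_1), \qquad \frac{d^{2}\Delta^{2}}{\gamma^{2}} \;\lesssim\; d\,M_2^{2}\quad (L_2).
\end{equation*}
Substituting the two choices of $\gamma$ from Step 2, both inequalities collapse, after cancellation, to the single constraint $\Delta^{2} \lesssim \varepsilon^{2}/d$, i.e.\ $\Delta \lesssim \varepsilon/\sqrt d$. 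The iteration-complexity term $LR^2/N^2 \lesssim \varepsilon$ is then checked to be consistent with the chosen $\gamma$ and $N$, giving the claimed rates without further restrictions on $\Delta$.

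\medskip
\noindent\emph{Main obstacle.} The technical subtlety is bookkeeping the $d$-factors: the $L_1$ and $L_2$ schemes have different $\kappa(p,d)$, different Lipschitz constants ($dM/\gamma$ vs.\ $\sqrt d\, M/\gamma$), different bias rates ($\gamma M_2/\sqrt d$ vs.\ $\gamma M_2$), and different leading variances ($M_2^2$ vs.\ $dM_2^2$). The nontrivial point I would emphasize is that, after propagating the $\gamma$ dictated by the bias into the adversarial part of the variance, all these $d$-factors cancel in exactly the same way for both schemes, producing the uniform estimate $\Delta \lesssim \varepsilon/\sqrt d$. Everything else is the routine verification that the standard accelerated bound on \textbf{A}$(L,\sigma^2)$ is applicable to the (smooth, convex, unbiased-stochastic-gradient) surrogate problem.
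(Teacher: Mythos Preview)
Your proposal is correct and follows essentially the same approach as the paper: choose $\gamma$ from the bias bounds of Lemmas \ref{Lemma:L1_connect_f_with_f_gamma}/\ref{Lemma:L2_connect_f_with_f_gamma} (yielding $\gamma\asymp\varepsilon\sqrt d/M_2$ and $\gamma\asymp\varepsilon/M_2$ respectively), then require the adversarial part of the variance in Lemmas \ref{Lemma:L1_sigma}/\ref{Lemma:L2_sigma} to be dominated by the noise-free part, and solve for $\Delta$. The paper's proof is terser---it simply writes down the variance-balance inequality and substitutes $\gamma$, deferring the rationale for the $\gamma$-choice to Corollary~1 of \cite{Lobanov_2022}---but your Steps~2--3 are computationally identical to it, and your Step~1 just makes explicit the convergence framework that the paper leaves implicit.
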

    \begin{proof}
        Since the adversarial noise accumulates only in the variance, in order to guarantee convergence (without losing in the oracle complexity estimates) it is necessary to guarantee that the following inequality is satisfied:
        \begin{itemize}
            \item[$\bullet$] for Smoothing scheme via $L_1$ randomization from Lemma \eqref{Lemma:L1_sigma} 
            \begin{equation*}
                M_2^2 \geq \frac{d^2 \Delta^2} {12 (1+\sqrt{2})^2 \gamma^{2} }
            \end{equation*}
            Then we have, using the fact that $\gamma = \frac{\sqrt{d} \varepsilon}{2 M_2}$
            \begin{equation*}
                \Delta \leq \sqrt{\frac{12 (1+\sqrt{2})^2 M_2^2 \gamma^2}{d^2}} = \frac{2 \sqrt{3} (1+\sqrt{2}) M_2 \sqrt{d} \varepsilon}{d M_2} \simeq \frac{\varepsilon}{\sqrt{d}}.
            \end{equation*}
            
            \item[$\bullet$] for Smoothing scheme via $L_2$ randomization from Lemma \eqref{Lemma:L2_sigma} 
            \begin{equation*}
                d M_2^2 \geq \frac{d^2 \Delta^2} {\sqrt{2} \gamma^{2} }
            \end{equation*}
            Then we have, using the fact that $\gamma = \frac{\varepsilon}{2 M_2}$
            \begin{equation*}
                \Delta \leq \sqrt{\frac{\sqrt{2}d M_2^2 \gamma^2}{d^2}} = \frac{2^{1/4} M_2 \varepsilon}{\sqrt{d} M_2} \simeq \frac{\varepsilon}{\sqrt{d}}.
            \end{equation*}
        \end{itemize}
    \end{proof}
    The results of Theorem \ref{Theorem} show that the maximum allowable level of adversarial stochastic noise is the same for the two smoothing schemes. Moreover, this estimation guarantees convergence without losing in other criteria, i.e. if we take as Algorithm \textbf{A}$(L, \sigma^2)$ the accelerated batched first-order method and apply one of the two smoothing techniques, we will create an optimal algorithm for three criteria at once: oracle complexity, number of successive iterations, and the maximum permissible level of adversarial stochastic noise. An explanation of the choice of the smoothing parameter value can be found in Corollary 1 \cite{Lobanov_2022}.
    
    \begin{remark}[Smoothing setting]\label{Remark}
        Since Assumption \ref{ass:Smoothness_of_function} is satisfied in the smooth statement of problem \eqref{Init_problem}, all statements above hold except Lemmas \ref{Lemma:L1_connect_f_with_f_gamma} and \ref{Lemma:L2_connect_f_with_f_gamma}: $f(x)\leq \tilde{f}_\gamma(x) \leq f(x) + \al{\frac{2}{d}\gamma^2 L^2}$ (in Lemma \ref{Lemma:L1_connect_f_with_f_gamma}) and $f(x)\leq \tilde{f}_\gamma(x) \leq f(x) + \al{\gamma^2 L^2}$ (in~Lemma \ref{Lemma:L2_connect_f_with_f_gamma}). Thus, we can conclude that if the Assumptions \ref{ass:Lipschitz_continuity}-\ref{ass:Adversarial_noise} are satisfied, then based on the accelerated first-order batched algorithm \textbf{A}$(\sigma^2)$, and using any gradient approximation ($L_1$~or~$L_2$ randomization) it will allow to create an optimal algorithm according to the three criteria, where the maximum allowed level of adversarial stochastic noise~is~$\Delta~\lesssim~\sqrt{\frac{\varepsilon}{d}}$.
        
    \end{remark}

\section{Discussion}\label{Section:Discussion}
    The essential difference between the stochastic adversarial noise considered in this paper and the deterministic adversarial noise is that in our case the adversarial noise does not accumulate into a bias, while in the deterministic noise concept the opposite is true. Precisely because this concept behaves less adversely, it is possible to solve the problem with a large value of adversarial noise without losing convergence, unlike deterministic adversarial noise, which has a maximum allowable noise level equal to $\mathcal{O} \left( \varepsilon^2 d^{-1/2} \right)$. In order to achieve optimality on the three criteria, it is necessary to rely on an accelerated (for optimal estimation of oracle complexity $\sim \mathcal{O} \left( d \varepsilon^{-2} \right)$ for case $p=2$) batched (for optimal estimation of iterative complexity $\sim \mathcal{O} \left( d^{1/4} \varepsilon^{-2} \right)$) first-order method. And also using the concept of adversarial stochastic noise, by guaranteeing the prevalence of variance (e.g. from \eqref{Lemma:L2_sigma}:  $d M_2^2 \geq \frac{d^2 \Delta^2} {\sqrt{2} \gamma^{2} }$ ), the Theorem \ref{Theorem} guarantees optimal convergence in three criteria. However, if the evaluation of the second moment is dominated by stochastic adversarial noise (i.e. $d M_2^2 < \frac{d^2 \Delta^2} {\sqrt{2} \gamma^{2} }$), then the convergence of the algorithm will worsen (estimation of oracle complexity will be $\sim \varepsilon^{-4}$), since the second moment will be $\kappa(p,d) d^2 \Delta^2 \varepsilon^{-2}$, considering that $\gamma \sim \varepsilon$. Such a low estimate of oracle complexity corresponds to oracle complexity when a gradient approximation look like $\frac{d}{\gamma}f(x + \gamma e) e$ (e.g. for $L_2$ randomization), i.e. when gradient approximation requires only one gradient-free oracle calls.

\section{Conclusion}\label{Section:Conclusion}
In this paper, we studied stochastic convex black box optimization problems in two settings: non-smooth and smooth settings. For two smoothing techniques (with $L_1$ and $L_2$ randomization), we obtained the maximum allowable levels of adversarial stochastic noise. We also showed in this paper that for using any smoothing scheme via $L_1$ or $L_2$ randomization, as well as solving the black box problem in a non-smooth or smooth setting, the maximum value of adversarial noise is the same. Finally, we analyzed the convergence rate of the algorithm, provided that the noise level is large.

%
%
%
%

\end{document}